\def\F{{\mathcal F}}
\newtheorem{theorem}{Theorem}[section]
\newtheorem{lemma}[theorem]{Lemma}
\newtheorem{corollary}[theorem]{Corollary}
\title{Maximizing the number of nonnegative subsets}
\author{Noga Alon
\thanks{
Sackler School of Mathematics and Blavatnik School of Computer
Science,
Tel Aviv University, Tel Aviv 69978, Israel and
Institute for Advanced Study,
Princeton, New Jersey, 08540, USA.
Email: {\tt nogaa@tau.ac.il}.
Research supported in part by an ERC Advanced grant,
by a USA-Israeli BSF
grant, by an ISF grant, by the Israeli I-Core program and
by the Simonyi Fund.
}
\and Harout Aydinian \thanks{Department of Mathematics, University of Bielefeld, Germany. Email: {\tt ayd@math.uni-bielefeld.de}}
\and Hao Huang\thanks{Institute for Advanced Study, Princeton, NJ 08540
and DIMACS at Rutgers University. Email: {\tt huanghao@math.ias.edu}.
Research supported in part by the IAS-DIMACS postdoctoral fellowship.}}
\begin{document}
\maketitle
\begin{abstract}
Given a set of $n$ real numbers, if the sum of elements of every subset of
size larger than $k$ is negative, what is the maximum number of subsets
of nonnegative sum? In this note we show that the answer is
$\binom{n-1}{k-1} + \binom{n-1}{k-2} + \cdots + \binom{n-1}{0}+1$,
settling a problem of Tsukerman.
We provide two proofs, the first
establishes and applies a weighted version of Hall's Theorem and
the second is based on an extension of the nonuniform Erd\H{o}s-Ko-Rado
Theorem.
\end{abstract}

\section{Introduction}

Let $\{x_1, \cdots, x_n\}$ be a sequence of $n$ real numbers whose
sum is negative. It is natural to ask the following question: What
is the maximum possible number
of subsets of nonnegative sum it can have? One can set $x_1=n-2$
and $x_2 = \cdots = x_n=-1$. This gives $\sum_{i=1}^n x_i = -1 < 0$ and
$2^{n-1}$ nonnegative subsets, since all the proper subsets containing
$x_1$, together with the empty set, have a nonnegative sum. It is also
not hard to see that this is best possible, since for
every subset $A$, either $A$ or its complement $\{x_1, \cdots, x_n\}
\backslash A$ must have a negative sum. Now a new question arises: suppose
it is known that every subset of size larger than $k$ has a negative sum,
what is the maximum number of nonnegative subsets? This question
was raised recently by Emmanuel Tsukerman \cite{Ts}.
The previous problem
is the special case when $k=n-1$.  A similar construction $x_1=k-1$,
$x_2=\cdots=x_n=-1$ yields a lower bound $\binom{n-1}{k-1} + \cdots +
\binom{n-1}{0} + 1$. In this note we prove that this is also tight.
\begin{theorem}
\label{main_theorem}
Suppose that every subset of $\{x_1, \cdots, x_n\}$ of size larger
than $k$ has a negative sum, then there are at most
$\binom{n-1}{k-1} + \cdots + \binom{n-1}{0}+1$
subsets with nonnegative sums.
\end{theorem}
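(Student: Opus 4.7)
The plan is to extract a purely combinatorial ``weak intersection'' condition on the family $\F$ of nonnegative subsets and then deduce the bound from an extension of the nonuniform Erd\H{o}s-Ko-Rado theorem. As a first step, I would observe that any $A \in \F$ has $|A| \leq k$, for otherwise $A$ would be a set of size exceeding $k$ with nonnegative sum. Moreover, whenever $A, B \in \F$ are disjoint, the union $A \cup B$ also has nonnegative sum, so $|A \cup B| = |A|+|B| \leq k$. Equivalently, any two members of $\F$ with $|A|+|B|>k$ must intersect.

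Granted this reformulation, the theorem reduces to the following extremal claim: if $n \geq k+1$ and $\mathcal{G} \subseteq \binom{[n]}{\leq k}$ satisfies ``every pair of disjoint members has total size at most $k$,'' then $|\mathcal{G}| \leq 1 + \sum_{j=0}^{k-1} \binom{n-1}{j}$, the bound being attained by $\{\emptyset\} \cup \{A : 1 \in A,\, |A| \leq k\}$. This is a genuine extension of the nonuniform EKR theorem (which assumes every pair intersects), since our hypothesis permits small disjoint pairs. My approach to the extremal claim is via shifting: apply the standard operators $S_{ij}$ (for $i<j$), verify by a short case analysis that the weak intersection condition is preserved under each shift, and reduce to a left-compressed family. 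For a compressed family I would then bound the number of members avoiding the element $1$ by an induction on $n$ and $k$, exploiting the fact that a set $A \not\ni 1$ forces every large set $B \in \mathcal{G}$ with $1 \in B$ and $|A|+|B|>k$ to intersect $A$, which is a strong structural restriction.

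I expect the main difficulty to lie in the compressed phase: the weak intersection condition is strictly weaker than full intersection, so shifting does not instantly force a star structure; in particular, sets of size not much larger than $k/2$ can legitimately avoid $1$, and handling them without overshooting the target requires careful bookkeeping across size strata. Should the shifting route prove too delicate, the fall-back is the weighted Hall's theorem strategy advertised in the abstract: realize the desired injection $\F \hookrightarrow \{\emptyset\} \cup \{A : 1 \in A,\, |A| \leq k\}$ as a system of distinct representatives in a bipartite graph whose edges encode single-element swaps, and verify Hall's condition using the numerical values $x_i$ themselves rather than only the combinatorial structure.
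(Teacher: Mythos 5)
Your reduction is exactly right and coincides with the paper's second proof: every nonnegative set has size at most $k$, and two disjoint nonnegative sets $A,B$ satisfy $|A|+|B|\le k$ because their union is again nonnegative. This converts the theorem into the purely combinatorial claim you state, which is precisely Theorem \ref{s-ekr} of the paper (plus the empty set accounting for the $+1$). So the framing is correct.

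The gap is that you never actually prove that extremal claim, and it is where all the content lives. Your plan --- left-compress with the standard $S_{ij}$ operators and then run an induction on $n$ and $k$ over the compressed family --- stalls at exactly the point you flag yourself: a left-compressed family satisfying the weak intersection condition need not be anywhere near a star (all nonempty sets of size at most $k/2$ is a compressed example with no common element), and you give no argument for the ``careful bookkeeping across size strata.'' As it stands this is a proof of the easy half of the theorem only. The paper closes this gap with a different and much shorter device: instead of the exchange shifts $S_{ij}$, it uses a \emph{pushing-up} operator $S_i$ that replaces $A$ by $A\cup\{i\}$ unless $A\cup\{i\}$ is already in $\F$ or $|A|=k$. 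A short case analysis shows $S_i$ preserves the weak intersection condition (the only delicate case is disjoint $A,B$ with $|A|+|B|=k$ and neither containing $i$), and iterating the $S_i$ turns $\F$ into an upset of $\binom{[n]}{\le k}$. The key observation is then that an upset satisfying your condition is automatically \emph{intersecting}: if $A,B$ were disjoint with $|A|+|B|\le k$, one could enlarge $A$ inside the upset, staying disjoint from $B$ (here $k\le n-1$ is used), until $|C|+|B|>k$, a contradiction. The ordinary non-uniform Erd\H{o}s--Ko--Rado bound then finishes the proof, with no stratified induction needed. If you want to salvage your own route, that upset observation is the missing idea: aim your compression at monotonicity upward rather than leftward.
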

One can further ask whether the extremal configuration $x_1=k-1, x_2=
\cdots = x_n=-1$ is unique, in the sense that the family $\mathcal{F}=\{U:
\sum_{i \in U} x_i \ge 0\}$ is unique up to isomorphism. Note that when
$k=n-1$, an alternative construction $x_1 = -n$, $x_2 = \cdots, x_n=1$
also gives $2^{n-1}$ nonnegative subsets, while the family $\mathcal{F}$
it defines is non-isomorphic to the previous one. More generally,
for $k=n-1$ any sequence $X=\{x_1, \ldots ,x_n\}$ of $n$
integers whose sum is $-1$ contains
exactly $2^{n-1}$ nonnegative subsets, as for any subset $A$ of
$X$,
exactly one of the two sets $A$ and $X-A$ has a nonnegative sum.
However, for every
$k<n-1$, we can prove the uniqueness by the following result in which the
number of nonnegative elements in the set is also taken into account.
\begin{theorem}
\label{strengthened_theorem}
Let $1 \leq t \le k < n$ be integers, and let $X$ be
a set of real numbers $\{x_1, \cdots,
x_n\}$, in which there are exactly $t$ nonnegative numbers. Suppose
that the sum of elements of every subset of size greater than $k$
is negative, then the number of nonnegative subsets is at most
$2^{t-1}(\binom{n-t}{k-t} + \cdots + \binom{n-t}{0}+1)$. This is
tight for all admissible values of $t,k$ and $n$.
\end{theorem}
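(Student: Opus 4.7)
The plan is to decompose each nonnegative subset $U$ as $U = A \cup B$ with $A = U \cap \{x_1,\ldots,x_t\}$ and $B = U \setminus \{x_1,\ldots,x_t\}$, so that the total count of nonnegative subsets equals $\sum_{A \subseteq [t]} f(A)$, where $f(A)$ denotes the number of $B \subseteq \{x_{t+1},\ldots,x_n\}$ satisfying $S_A + S_B \ge 0$ (writing $S_X = \sum_{i \in X} x_i$).

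The first step applies Theorem~\ref{main_theorem} to an auxiliary $(n-t+1)$-element sequence. For each $A \subseteq [t]$, I would consider $(S_A, x_{t+1}, \ldots, x_n)$: one nonnegative element $S_A$ together with the $n-t$ original negatives. I will verify that every subset of this auxiliary sequence of size exceeding $k - |A| + 1$ has negative sum---subsets containing $S_A$ correspond to original sets $A \cup B'$ of size greater than $k$, whose sum is negative by hypothesis; for subsets $B' \subseteq \{x_{t+1},\ldots,x_n\}$ not containing $S_A$ of size exceeding $k - |A| + 1$, the original hypothesis applied to $A \cup B'$ yields $S_A + S_{B'} < 0$ and hence $S_{B'} < -S_A \le 0$. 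Theorem~\ref{main_theorem} then gives $f(A) \le \binom{n-t}{k-|A|} + \cdots + \binom{n-t}{0}$; in particular $f([t]) \le \binom{n-t}{k-t} + \cdots + \binom{n-t}{0}$.

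The second step produces the factor $2^{t-1}$ by pairing each $A \subseteq [t]$ with its complement $\bar A = [t] \setminus A$ within $[t]$. The key observation is that simultaneous nonnegativity of $A \cup B$ and $\bar A \cup B$ requires $S_A + S_B \ge 0$ and $S_{[t]} - S_A + S_B \ge 0$, and summing these forces $S_{[t]} + 2 S_B \ge 0$---a strong restriction on $B$. Combining this pairing constraint with the sharp bound on $f([t])$ from the first step should give
\begin{equation*}
\sum_{A \subseteq [t]} f(A) \le 2^{t-1}\Bigl(\binom{n-t}{k-t} + \cdots + \binom{n-t}{0} + 1\Bigr).
\end{equation*}
A natural implementation is an injection from the nonnegative family $\mathcal{F}$ into $2^{[t-1]} \times \mathcal{F}'$, where $\mathcal{F}'$ is the nonnegative family of $(S_{[t]}, x_{t+1}, \ldots, x_n)$ (of size at most $\binom{n-t}{k-t}+\cdots+\binom{n-t}{0}+1$ by the first step): the first coordinate records $A \cap [t-1]$, and the second encodes $B$ together with a tag distinguishing whether $x_t \in A$.

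The main obstacle is that the pointwise inequality $f(A) + f(\bar A) \le f([t]) + 1$ can fail in general, so the pairing must be combined with a tagging or averaging argument that exploits the constraint $S_{[t]} + 2 S_B \ge 0$ to resolve collisions in the injection (equivalently, to cancel excess contributions from some $A$'s against deficits from others). Tightness is witnessed by the construction $x_1 = \cdots = x_{t-1} = \epsilon$, $x_t = k - t$, $x_{t+1} = \cdots = x_n = -1$ for small $\epsilon > 0$: the nonnegative subsets partition into those containing $x_t$, contributing $2^{t-1}(\binom{n-t}{k-t}+\cdots+\binom{n-t}{0})$ (any choice of subset of $\{x_1,\ldots,x_{t-1}\}$ paired with any $B$ of size at most $k-t$), and those contained in $\{x_1, \ldots, x_{t-1}\}$, contributing $2^{t-1}$.
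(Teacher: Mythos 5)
Your setup is sound: decomposing each nonnegative set as $A\cup B$ with $A\subseteq [t]$, observing that any such $B$ must satisfy $|B|\le k-t$ (this is also how the paper starts), pairing each $A$ with its complement $[t]\setminus A$, and reducing to a bound of $\binom{n-t}{k-t}+\cdots+\binom{n-t}{0}+1$ per complementary pair. Your tightness example also checks out. But the central counting step is not proved, and you say so yourself: you need, for each complementary pair $\{A,\bar A\}$, that $f(A)+f(\bar A)\le \binom{n-t}{k-t}+\cdots+\binom{n-t}{0}+1$, and neither the observation $S_{[t]}+2S_B\ge 0$ (which only handles the case where the \emph{same} $B$ works for both $A$ and $\bar A$) nor the proposed injection delivers it. Worse, the injection as sketched groups sets by $A\cap[t-1]$, i.e.\ it pairs $A_0$ with $A_0\cup\{t\}$ rather than with $[t]\setminus A_0$, and for that pairing the required bound is false: in the extremal configuration $x_1=k-t$, $x_2=\cdots=x_t=0$, $x_{t+1}=\cdots=x_n=-1$ one has $f(\{1\})=f(\{1,t\})=\binom{n-t}{k-t}+\cdots+\binom{n-t}{0}$, so the fiber over $A_0=\{1\}$ has size $2\bigl(\binom{n-t}{k-t}+\cdots+\binom{n-t}{0}\bigr)$, exceeding $|\mathcal F'|$. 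So the collision problem you flag is real and the argument does not close.

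The missing idea is the paper's key structural observation together with a matching argument. If $A\cup S$ and $\bar A\cup T$ are both nonnegative and $S\cap T=\emptyset$, then $[t]\cup S\cup T$ is nonnegative, hence $|S|+|T|\le k-t$; equivalently, in the bipartite ``conflict graph'' on pairs of subsets $S,T\subseteq\{t+1,\ldots,n\}$ with $1\le|S|,|T|\le k-t$, where $S\sim T$ iff $S\cap T=\emptyset$ and $|S|+|T|\ge k-t+1$, each edge carries at most one nonnegative set. The paper then shows (Corollary \ref{perfect_matching}, proved via the weighted Hall-type Lemma \ref{weighted_hall}) that this conflict graph has a perfect matching, so at most half of the $2\bigl(\binom{n-t}{1}+\cdots+\binom{n-t}{k-t}\bigr)$ matched vertices are nonnegative; adding the two unmatched vertices $A$ and $\bar A$ gives exactly the per-pair bound $\binom{n-t}{k-t}+\cdots+\binom{n-t}{0}+1$. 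Note that your disjointness constraint involves two \emph{different} $B$'s attached to $A$ and $\bar A$, not the same one, and turning that cross-constraint into a count genuinely requires a matching (or an equivalent cross-intersecting/EKR-type tool); this is the heart of the proof and is absent from your proposal. Your Step 1, by contrast, is correct but not the bottleneck: each auxiliary sequence has a single nonnegative element, so the bound on $f(A)$ follows from the easy one-nonnegative-element case without invoking Theorem \ref{main_theorem} (which the paper in any case derives \emph{from} Theorem \ref{strengthened_theorem}).
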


For every fixed $k$ and $n$ with $k< n-1$, the expression in Theorem
\ref{strengthened_theorem} is strictly decreasing in $t$.
Indeed, if $1 \leq t < t+1 \leq k \leq n$, then, using Pascal's
identity:
$$
2^{t-1}\left(\binom{n-t}{k-t} + \cdots + \binom{n-t}{0}+1\right)
-
2^{t}\left(\binom{n-t-1}{k-t-1} + \cdots + \binom{n-t-1}{0}+1\right)
$$
$$
=2^{t-1} \left[ \binom{n-t}{k-t} + \cdots + \binom{n-t}{0}
- \binom{n-t-1}{k-t-1} - \cdots - \binom{n-t-1}{0}
- \binom{n-t-1}{k-t-1} - \cdots - \binom{n-t-1}{0}-1 \right]
$$
$$
=2^{t-1} \left[\binom{n-t}{k-t}-\binom{n-t-1}{k-t-1}+\binom{n-t}{0}
-\binom{n-t-1}{0}-1 \right]
$$
$$
=2^{t-1}\left[\binom{n-t-1}{k-t}-1 \right].
$$
The last quantity is strictly positive for all $t < k <n-1$ (and is
zero if $k=n-1$).

Therefore, the above theorem
implies Theorem \ref{main_theorem} as a corollary and shows that it is
tight for $k<n-1$
only when there is exactly one nonnegative number. The bound is
Theorem \ref{strengthened_theorem} is also tight by taking $x_1 = k-t$,
$x_2 = \cdots = x_t= 0$, $x_{t+1} = \cdots = x_n = -1$. In this example,
the sum of any $k+1$ elements is negative, and a subset is nonnegative
if and only if it is either of the form $\{x_1\} \cup S \cup T$, where $S$
is an arbitrary subset of $\{x_2, \cdots, x_t\}$ and $T$ is a subset of
$\{x_{t+1}, \cdots, x_n\}$ having size at most $k-t$, or when it is a
subset of $\{x_2, \cdots, x_t\}$.

The rest of this short paper is organized as follows. In Section
\ref{section_main} we prove a Hall-type theorem and deduce from it
the existence of perfect matchings in certain bipartite graphs.
This enables us to obtain Theorem \ref{strengthened_theorem} as a corollary.
Section \ref{section_alternative_proof} includes a
strengthening of the non-uniform version
of the Erd\H os-Ko-Rado theorem, which
leads to an alternative proof of Theorem \ref{main_theorem}.
In the last section, we discuss some further research
directions.

\section{The Main result}\label{section_main}

The following lemma  can be regarded as a
strengthening of the fundamental theorem of Hall \cite{hall}.
\begin{lemma}
\label{weighted_hall}
In a bipartite graph $G$ with two parts $A$ and $B$, suppose there exist
partitions $A=A_1 \cup \cdots \cup A_k$ and $B=B_1 \cup \cdots \cup B_l$,
such that for every $i \in [k], j \in [l]$, in the induced bipartite
graph $G[A_i, B_j]$ all the vertices in $A_i$ have equal degrees and
all the vertices in $B_j$ have equal degrees too. Define an auxiliary
bipartite graph $H$ on the same vertex set, and replace every nonempty
$G[A_i, B_j]$ by a complete bipartite graph. Then $G$ contains a perfect
matching if and only if $H$ contains a perfect matching.
\end{lemma}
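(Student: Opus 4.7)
\medskip
\noindent
\textbf{Proof proposal.} The forward direction is free: since $G\subseteq H$ on the same vertex set, any perfect matching of $G$ is already a perfect matching of $H$. My plan for the converse is to convert a perfect matching $M$ of $H$ into a fractional perfect matching of $G$, and then extract an integral perfect matching from Hall's theorem.

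Let $m_{ij}$ denote the number of edges of $M$ lying between $A_i$ and $B_j$. Since $M$ saturates each $A_i$ and each $B_j$, we have $\sum_j m_{ij}=|A_i|$ and $\sum_i m_{ij}=|B_j|$, and $m_{ij}>0$ forces $G[A_i,B_j]$ to be nonempty. Writing $d^{A}_{ij}$ and $d^{B}_{ij}$ for the common degrees from the $A_i$-side and $B_j$-side of the biregular block $G[A_i,B_j]$, and $e_{ij}=|A_i|d^{A}_{ij}=|B_j|d^{B}_{ij}$ for the number of edges of that block, I will assign every edge $uv$ of $G$ with $u\in A_i$, $v\in B_j$ the uniform weight $w_{uv}=m_{ij}/e_{ij}$ (with the convention that the weight is $0$ whenever $m_{ij}=0$). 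Biregularity then yields, for each $u\in A_i$,
$$
\sum_{v\,:\,uv\in E(G)} w_{uv} \;=\; \sum_{j} d^{A}_{ij}\cdot\frac{m_{ij}}{e_{ij}} \;=\; \sum_{j}\frac{m_{ij}}{|A_i|} \;=\; 1,
$$
and symmetrically $\sum_{u\,:\,uv\in E(G)} w_{uv}=1$ for every $v\in B$. Thus the $w_{uv}$ form a fractional perfect matching of $G$.

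To finish, I verify Hall's condition for $G$. For any $S\subseteq A$,
$$
|N_G(S)| \;\geq\; \sum_{v\in N_G(S)}\;\sum_{u\in S,\,uv\in E(G)} w_{uv} \;=\; \sum_{u\in S}\sum_{v\,:\,uv\in E(G)} w_{uv} \;=\; |S|,
$$
since restricting the inner sum at each $v$ to $u\in S$ gives at most $\sum_{u\,:\,uv\in E(G)} w_{uv}=1$, and every neighbor of any $u\in S$ already lies in $N_G(S)$. Hall's theorem then produces the perfect matching of $G$.

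I do not foresee a serious obstacle beyond guessing the correct weights: the content of the lemma is precisely that biregularity of each block $G[A_i,B_j]$ is exactly what lets the uniform weighting inside the block combine with the integer ``transportation'' data $(m_{ij})$ to yield a fractional perfect matching of $G$. One could alternatively conclude by appealing to the integrality of the bipartite perfect matching polytope, but Hall's theorem is self-contained and keeps the argument elementary.
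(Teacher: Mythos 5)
Your proof is correct, and while it shares the paper's overall skeleton (the forward direction is trivial; for the converse one verifies Hall's condition for $G$ using block-level data extracted from $H$), it replaces the paper's two main technical steps with cleaner ones. The paper first derives, for each nonempty block, the pointwise inequality $|Y_j|/|B_j|\ge |X_i|/|A_i|$ between the fractions of $X=S$ and $Y=N_G(S)$ meeting each part, and then needs nonnegative multipliers $d_{i,j}$ with row sums $|A_i|$, column sums $|B_j|$, supported on nonempty blocks, to combine these into $|Y|\ge |X|$; it obtains the $d_{i,j}$ by setting up a flow network and invoking max-flow min-cut together with the Hall condition that $H$ satisfies. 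Your observation that the numbers $m_{ij}$, counting the edges of a perfect matching of $H$ inside each block, already \emph{are} such a transportation plan short-circuits the entire flow argument, since the support condition is automatic from the construction of $H$. Likewise, spreading $m_{ij}$ uniformly over the $e_{ij}=|A_i|d^A_{ij}=|B_j|d^B_{ij}$ edges of the block to get a fractional perfect matching of $G$, and then deducing $|N_G(S)|\ge\sum_{v\in N_G(S)}\sum_{u\in S}w_{uv}=|S|$, is a tidier packaging of the same double-counting that underlies the paper's $y_j\ge x_i$ step. The one point worth making explicit is that $e_{ij}>0$ whenever $m_{ij}>0$: this holds because a nonempty biregular block has strictly positive common degrees on both sides, so the weights are well defined. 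You should also note that $|A|=|B|$ (immediate from $H$ having a perfect matching) so that the $A$-saturating matching from Hall's theorem is indeed perfect; you use this implicitly. Overall your route is self-contained and arguably preferable, as it needs only Hall's theorem and no network-flow machinery.
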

\begin{proof}
The ``only if'' part is obvious since $G$ is a subgraph of $H$. In
order to prove the ``if'' part, note first that if $H$ contains a
perfect matching, then $\sum_{i=1}^k |A_i| =\sum_{j=1}^l |B_j|$. We will
verify that the graph $G$ satisfies the conditions in Hall's Theorem:
for any subset $X \subset A$, its neighborhood has size $|N_G(X)| \ge
|X|$. Put $Y=N_G(X)$, and
$$
X_i= X \cap A_i,~~~~~Y_j= Y \cap B_j,
$$
and define two sequences of numbers
$\{x_i\}$, $\{y_j\}$ so that
$$
|X_i|=x_i |A_i|,~~~~~|Y_j|=y_j|B_j|.
$$
Consider the pairs $(i, j)$ such that $G[A_i,B_j]$
is nonempty. In this induced bipartite subgraph suppose every vertex
in $A_i$ has degree $d_1$, and every vertex in $B_j$ has degree
$d_2$.
Double counting the number of edges gives
$d_1 \cdot |A_i| = d_2 \cdot |B_j|.$
On the other hand, we also have
$d_1 \cdot |X_i| \le d_2 \cdot |Y_j|$,
since every vertex in $X_i$ has exactly $d_1$ neighbors in $Y_j$,
and every vertex in $Y_j$ has at most $d_2$ neighbors in $X_i$. Combining
these two inequalities, we have $y_j \ge x_i$ for every pair $(i, j)$
such that $G[A_i, B_j]$ is nonempty. We claim that these inequalities
imply that $|Y| \ge |X|$, i.e.
\begin{equation}\label{hall_condition}
\sum_{j=1}^l |B_j| y_j \ge \sum_{i=1}^k |A_i| x_i.
\end{equation}
To prove the claim
it suffices to find $d_{i, j} \ge 0$ defined on every pair $(i, j)$ with
nonempty $G[A_i, B_j]$, such that
$$
\sum_{i, j} d_{i, j} (y_j -x_i) =
\sum_{j=1}^l |B_j| y_j - \sum_{i=1}^k |A_i| x_i.
$$
In other words,
the conditions for Hall's Theorem would be satisfied if the following
system has a solution:
\begin{equation}\label{eqn}
\sum_i d_{i, j} = |B_j|;~~~~\sum_j d_{i, j} =|A_i|;~~~~ d_{i,j}
\ge 0;~~~~d_{i,j}=0~\textrm{if~}G[A_i, B_j]=\emptyset.
\end{equation}
The standard way to prove that there is a solution is by
considering an appropriate flow problem.
Construct a network with a source $s$, a sink $t$, and vertices $a_1,
\cdots, a_k$ and $b_1, \cdots, b_l$. The source $s$ is connected to every
$a_i$ with capacity $|A_i|$, and every $b_j$ is connected to the sink $t$
with capacity $|B_j|$. For every pair $(i, j)$, there is an edge from
$a_i$ to $b_j$. Its capacity is $+\infty$ if $G[A_i, B_j]$ is nonempty and
$0$ otherwise. Then \eqref{eqn} is feasible if and only if there exists a
flow of value $\sum_i |A_i|= \sum_j |B_j|$. Now we consider an arbitrary
cut in this network: $(s \cup \{a_i\}_{i \in U_1} \cup \{b_j\}_{j \in
U_2}, t \cup \{a_i\}_{i \in [k] \backslash U_1} \cup \{b_j\}_{j \in
[l] \backslash U_2})$. Its capacity is finite only when for every $i
\in U_1, j \in [l]\backslash U_2$, $G[A_i, B_j]$ is empty. Therefore
in the auxiliary graph $H$, if we take $Z = \cup_{i \in U_1} A_i$, then
the degree condition $|N_H(Z)| \ge |Z|$ implies that $\sum_{j \in U_2}
|B_j| \ge \sum_{i \in U_1} |A_i|$ and thus the capacity of this cut
is equal to
$$\sum_{i \in [k] \backslash U_1} |A_i| + \sum_{j \in U_2}
|B_j| \ge \sum_{i \in [k]\backslash U_1} |A_i| + \sum_{i \in U_1} |A_i|
= \sum_{i=1}^k |A_i|.
$$
Therefore the minimum cut in this network has
capacity at least $\sum_{i=1}^k |A_i|$, and there is a cut of
exactly this capacity, namely the cut consisting of all edges
emanating from the source $s$. By the max-flow min-cut theorem,
we obtain a maximum flow of the same size and this provides us with a
solution $d_{i, j}$ to \eqref{eqn}, which verifies the Hall's condition
\eqref{hall_condition} for the graph $G$.  \end{proof}

\noindent \textbf{Remark.} Lemma \ref{weighted_hall} can also be
reformulated in the following way: given $G$ with the properties stated,
define the reduced auxiliary graph $H'$  on the vertex set $A'
\cup B'$, where $A'=[k]$, $B'=[l]$, such that $i \in A'$ is adjacent to
$j \in B'$ if $G[A_i, B_j]$ is nonempty. If for every subset $X \subset
A'$, $\sum_{j \in N_{H'}(X)} |B_j| \ge \sum_{i \in X} |A_i|$, then $G$
has a perfect matching. For the case of partitioning $A$ and $B$ into
singletons, this is exactly Hall's Theorem.

\begin{corollary}
\label{perfect_matching}
For $m \ge r+1$, let $G$ be the bipartite graph with two parts $A$ and
$B$, such that both parts consist of subsets of $[m]$ of size between $1$
and $r$.  $S \in A$ is adjacent to $T \in B$ iff $S \cap T = \emptyset$
and $|S| + |T| \ge r+1$. Then $G$ has a perfect matching.
\end{corollary}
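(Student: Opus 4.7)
The plan is to apply Lemma \ref{weighted_hall}. Partition $A$ and $B$ by cardinality, setting $A_i = \{S \in A : |S|=i\}$ and $B_j = \{T \in B : |T|=j\}$ for $i,j \in [r]$. By the natural symmetry of $G$ under permutations of $[m]$, each induced bipartite graph $G[A_i,B_j]$ is biregular: every $S \in A_i$ has exactly $\binom{m-i}{j}$ neighbors in $B_j$ when $r+1 \le i+j \le m$ and none otherwise, and analogously for $B_j$. Hence the hypotheses of Lemma \ref{weighted_hall} hold, and by the remark following it, a perfect matching in $G$ will follow once I verify that for every $U \subseteq [r]$,
\[
\sum_{j \in N(U)} \binom{m}{j} \;\ge\; \sum_{i \in U} \binom{m}{i},
\]
where $N(U) = \{j \in [r] : \exists\, i \in U,\ r+1 \le i+j \le m\}$.

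A short calculation shows that $N(U)$ is always the interval $[r+1-\max U,\ \min(r, m-\min U)]$, depending only on $\min U$ and $\max U$. Enlarging $U$ to $[a,b] = [\min U, \max U]$ only strengthens the required inequality, so I assume $U = [a,b]$. When $a > m-r$, the upper endpoint of $N(U)$ is $m-a$, and the symmetry $\binom{m}{j} = \binom{m}{m-j}$ rewrites $\sum_{j=r+1-b}^{m-a}\binom{m}{j}$ as $\sum_{j=a}^{m-r-1+b}\binom{m}{j}$, which dominates $\sum_{i=a}^{b}\binom{m}{i}$ because $m-r-1+b \ge b$. When $a \le m-r$, $N(U) = [r+1-b, r]$, and since $\sum_{i=a}^{b}\binom{m}{i} \le \sum_{i=1}^{b}\binom{m}{i}$, it suffices to establish
\[
\sum_{j=r+1-b}^{r}\binom{m}{j} \;\ge\; \sum_{i=1}^{b}\binom{m}{i}.
\]

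The main obstacle is this last inequality, where the hypothesis $m \ge r+1$ is used essentially. The idea is to cancel the binomials at indices lying in the overlap $[r+1-b, b]$ of $[1,b]$ and $[r+1-b, r]$ (nonempty only when $b \ge (r+1)/2$), leaving the comparison $\sum_{j=b+1}^{r}\binom{m}{j} \ge \sum_{i=1}^{r-b}\binom{m}{i}$ between the two disjoint tail intervals; in the regime $b < (r+1)/2$ the two intervals are already disjoint and no cancellation is needed. In both cases the involution $i \leftrightarrow r+1-i$ pairs the remaining indices and reduces the task to showing $\binom{m}{r+1-i} \ge \binom{m}{i}$ for the relevant $i \le (r+1)/2$. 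This last inequality follows from the unimodality of $j \mapsto \binom{m}{j}$ about $m/2$ together with $m \ge r+1$, via a brief case analysis according to whether $r+1-i$ falls below, at, or above $m/2$. An application of Lemma \ref{weighted_hall} then produces the desired perfect matching in $G$.
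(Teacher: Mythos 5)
Your overall strategy --- partitioning $A$ and $B$ by cardinality, invoking Lemma \ref{weighted_hall} through the remark following it, and verifying the reduced Hall condition by binomial estimates --- is exactly the paper's, and your treatment of the two interval cases (the reflection $j \mapsto m-j$ when $a > m-r$, and the cancellation plus the involution $i \mapsto r+1-i$ when $a \le m-r$) is correct and matches the paper's computations. However, one claim in your reduction step is false. You assert that for every $U \subseteq [r]$ the neighborhood $N(U)$ equals the interval $[r+1-\max U,\ \min(r, m-\min U)]$, and you use this to replace $U$ by $[\min U, \max U]$. This is true when $m \ge 2r$ (there $N(i) = [r+1-i, r]$ for every $i$, so $N(U)$ depends only on $\max U$), but it fails in the regime $r+1 \le m \le 2r-1$ for sets $U$ with large gaps: for two elements $i < i'$ of $U$ the intervals $N(i)$ and $N(i')$ need not meet unless $m \ge r + i' - i$. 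Concretely, take $m = r+1$ and $U = \{1, r\}$ with $r \ge 3$; then $N(1) = \{r\}$ and $N(r) = \{1\}$, so $N(U) = \{1, r\}$ rather than $[1,r]$. Consequently $N(U)$ can be a proper subset of $N([\min U, \max U])$, and the verified inequality for the interval does not formally imply the required inequality for $U$, since both sides shrink when you pass from $[\min U, \max U]$ back to $U$.

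The gap is localized and repairable by a standard extra step. Each $N(i)$ is a nonempty interval; decompose $U$ into blocks $U_1, \ldots, U_p$ according to the connected components of the interval system $\{N(i)\}_{i \in U}$. The sets $N(U_1), \ldots, N(U_p)$ are pairwise disjoint, so Hall's condition for $U$ follows from Hall's condition for each block separately; and for a connected block $U_q$ one checks that $N(U_q)$ is an interval containing both endpoints of $N([\min U_q, \max U_q])$, hence equals it, after which your enlargement argument applies verbatim. (Some such justification is also implicitly needed for the paper's own assertion that only the sets $\{1,\ldots,t\}$ and $\{s,\ldots,t\}$ with $s \ge m-r$ must be checked.) With that repair your proof goes through and is essentially identical to the one in the paper.
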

\begin{proof}

For $1 \le i \le r$, let $A_i=B_i= \binom{[m]}{i}$, i.e. all the
$i$-subsets of $[m]$. Let us consider the bipartite graph $G[A_i,
B_j]$ induced by $A_i \cup B_j$. Note that when $i+j \le r$ or $i+j>m$,
$G[A_i, B_j]$ is empty, while when $r+1 \le i+j \le \min\{2r, m\}$, every
vertex in $A_i$ has degree $\binom{m-i}{j}$ and every vertex in $B_j$
has degree $\binom{m-j}{i}$. Therefore by Lemma \ref{weighted_hall},
it suffices to check that the reduced auxiliary graph $H'$ satisfies
the conditions in the above remark. We discuss the following two cases.

First suppose $m \ge 2r$, note that in the reduced graph $H'$,
$A'=B'=[r]$, every vertex $i$ in $A'$ is adjacent to the vertices
$\{r+1-i, \cdots, r\}$ in $B'$. The only inequalities we need to verify
are: for every $1 \le t \le r$, $\sum_{j=r+1-t}^r |B_j| \ge \sum_{i=1}^t
|A_i|$. Note that
$$
\sum_{j=r+1-t}^r |B_j| = \sum_{i=1}^t
\binom{m}{r-t+i} \ge \sum_{i=1}^t \binom{m}{i}.
$$
The last inequality holds because the function $\binom{m}{k}$
is increasing in $k$ when $k \le m/2$.

Now we consider the case $r+1 \le m \le 2r-1$. In this case
every vertex
$i$ in $A'$
is adjacent to vertices from $r+1-i$ to $\min\{r, m-i\}$. More precisely,
if $1 \le i \le m-r$, then $i$ is adjacent to $\{r+1-i, \ldots ,r\}$
in $B'$, and if
$m-r+1 \le i \le r$, then $i$ is adjacent to $\{r+1-i \ldots
,m-i\}$ in $B'$. It
suffices to verify the conditions for $X=\{1, \cdots, t\}$ when $
t \le r$, and for $X=\{s, \cdots, t\}$ when $m-r \le s \le t \le r$. In
the first case $N_{H'}(X)=\{r+1-t, \cdots, r\}$, and the desired
inequality holds since
$$
\sum_{j=r+1-t}^r \binom{m}{j} = \sum_{i=1}^r \binom{m}{i}
- \sum_{i=1}^{r-t} \binom{m}{i} \ge  \sum_{i=1}^r \binom{m}{i} -
\sum_{i=t+1}^{r} \binom{m}{i}= \sum_{i=1}^t \binom{m}{i}.
$$

For the second case, $N_{H'}(X)=\{r+1-t, \cdots, m-s\}$, and since $m
\ge r+1$,
$$
\sum_{j=r+1-t}^{m-s} \binom{m}{j} = \sum_{i=s}^{m-r+t-1}
\binom{m}{i} \ge \sum_{i=s}^t \binom{m}{i}.
$$
This concludes the proof of the corollary.
\end{proof}

\iffalse
DO YOU WANT TO ADD HERE $S \cap T = \emptyset$? THIS IS INCORRECT,
IF, FOR EXAMPLE, $s=t>m/2$
\noindent \textbf{Remark. }The proof of Corollary \ref{perfect_matching}
can be adapted to show the following statement: the bipartite graph $G$
has parts $A$ and $B$ such that both parts consist of subsets of $[m]$
of sizes between $s$ and $t$, $S \in A$ is adjacent to $T \in B$ if
$|S|+|T| \ge s+t$. Then $G$ has a perfect matching.\\
\fi

We are now ready to deduce Theorem \ref{strengthened_theorem} from
Corollary \ref{perfect_matching}.

\begin{proof} of Theorem \ref{strengthened_theorem}:
Without loss of generality, we may assume that $x_1 \ge x_2 \ge \cdots
\ge x_n$, and $x_1 + \cdots +x_{k+1}<0$. Suppose there are $t \le k$
nonnegative numbers, i.e.
$x_1 \ge \cdots \ge x_t \ge 0$ and $x_{t+1}, \cdots, x_n < 0$.
If $t=1$, then every nonempty subset of nonnegative
sum must contain $x_1$, which gives at most
$\binom{n-1}{k-1} + \cdots + \binom{n-1}{0} + 1$
nonnegative subsets in total, as needed.

Suppose $t \ge 2$. We first partition all the subsets of $\{1, \cdots,
t\}$ into $2^{t-1}$ pairs $(A_i, B_i)$, with the property that $A_i \cup
B_i = [t]$, $A_i \cap B_i = \emptyset$ and  $1 \in A_i$. This can be done
by pairing every subset with its complement. For every $i$, consider the
bipartite graph $G_i$ with vertex set $V_{i,1} \cup V_{i,2}$ such that
$V_{i, 1} = \{A_i \cup S: S \subset \{t+1, \cdots, n\}, |S| \le k-t\}$ and
$V_{i, 2} = \{B_i \cup S: S \subset \{t+1, \cdots, n\}, |S| \le k-t\}$.
Note that if a nonempty subset with index set $U$ has a nonnegative sum,
then $|U \cap \{t+1, \cdots, n\}| \le k-t$, otherwise $U \cup \{1, \cdots,
t\}$ gives a nonnegative subset with more than $k$ elements. Therefore
every nonnegative subset is a vertex of one of the graphs $G_i$.
Moreover,
we can define the edges of $G_i$ in a way that $A_i \cup S$ is adjacent
to $B_i \cup T$ if and only if $S, T \subset\{t+1, \cdots, n\}$, $S \cap
T = \emptyset$ and $|S|+|T| \ge k-t+1$. Note that by this definition,
two adjacent vertices cannot both correspond to nonnegative subsets,
otherwise $S \cup T \cup \{1, \cdots, t\}$ gives a nonnegative subset
of size larger than $k$. Applying Corollary \ref{perfect_matching}
with $m=n-t$, $r=k-t$, we conclude that there is a matching
saturating all
the vertices in $G_i$ except $A_i$ and $B_i$. Therefore the number of
nonnegative subsets in $G_i$ is at most $\binom{n-t}{k-t} + \cdots +
\binom{n-t}{0} + 1$. Note that this number remains the same for different
choices of $(A_i, B_i)$, so the total number of nonnegative subsets is
at most $2^{t-1}(\binom{n-t}{k-t} + \cdots + \binom{n-t}{0}+1)$.

\end{proof}

\section{A strengthening of the non-uniform EKR theorem}\label{section_alternative_proof}
A conjecture of
Manickam, Mikl\'{o}s, and Singhi (see \cite{MM}, \cite{MS})
asserts
that for any integers $n, k$ satisfying $n \geq 4k$, every set of
$n$ real
numbers with a nonnegative sum has at least $\binom{n-1}{k-1}$
$k$-element
subsets whose sum is also nonnegative. The study of this problem
(see, e.g., \cite{AHS} and the references therein) reveals a
tight connection between questions about nonnegative sums and
problems in extremal finite set theory.  A connection
of the same flavor
exists for the problem studied in this note, as explained in what
follows.

The Erd\H{o}s-Ko-Rado theorem \cite{erdos-ko-rado} has the following
non-uniform version: for integers $1 \le k \le n$, the maximum size
of an intersecting family of subsets of sizes up to $k$ is equal to
$\binom{n-1}{k-1}+ \binom{n-1}{k-2} + \cdots + \binom{n-1}{0}$. The
extremal example is the family of all the subsets of size at most $k$
containing a fixed element. This result is a direct corollary of
the uniform Erd\H{o}s-Ko-Rado theorem, together with the obvious fact
that
each such family cannot contain
a set and its complement. In this section we show that the following strengthening is also true. It also provides an alternative
proof of Theorem \ref{main_theorem}.

\begin{theorem} \label{s-ekr}
Let $1\leq k\leq n-1$, and let $\F \subset 2^{[n]}$ be a family consisting of subsets  of size at most $k$, where
$\emptyset\notin \F$. Suppose that for every two subsets $A,B\in \F$, if $A\cap B=\emptyset$, then $|A|+|B|\leq k$. Then
$|\F|\leq \binom{n-1}{k-1}+\binom{n-1}{k-2}+\ldots+\binom{n-1}{0}$.
\end{theorem}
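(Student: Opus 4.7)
The plan is to prove Theorem~\ref{s-ekr} by induction on $n$. For the base case $n = k+1$, the hypothesis that every disjoint pair in $\mathcal{F}$ has size-sum at most $k$ forbids any complementary pair $\{S, [n]\setminus S\}$ of nonempty proper subsets from both lying in $\mathcal{F}$ (such a pair is disjoint with size-sum $n = k+1$). Since there are $2^{n-1}-1$ such complementary pairs, we obtain $|\mathcal{F}| \le 2^{n-1}-1 = \sum_{i=0}^{k-1}\binom{n-1}{i}$.

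For the inductive step with $n \ge k+2$, I would fix a distinguished element, say $1$, and partition $\mathcal{F} = \mathcal{F}_1 \sqcup \mathcal{F}_{\bar 1}$ according to whether a set contains $1$. Writing $V_1 = \{S \subseteq [n] : 1 \in S,\ 1 \le |S| \le k\}$, which has exactly $\sum_{i=0}^{k-1}\binom{n-1}{i}$ elements, the trivial bound $|\mathcal{F}_1| \le |V_1|$ holds, so the theorem reduces to establishing an injection $\iota : \mathcal{F}_{\bar 1} \hookrightarrow V_1 \setminus \mathcal{F}_1$; then $|\mathcal{F}| = |\mathcal{F}_1| + |\mathcal{F}_{\bar 1}| \le |V_1|$ follows.

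To define $\iota$, I would apply Corollary~\ref{perfect_matching} on the ground set $\{2, \ldots, n\}$ (valid since $n-1 \ge k+1$) with $r = k$, yielding a bijection $\phi$ on the nonempty subsets of $\{2,\ldots,n\}$ of size at most $k$ such that each pair $(S, \phi(S))$ is disjoint with size-sum at least $k+1$. Roughly, I would set $\iota(S) = \{1\} \cup \phi(S)$ when $|\phi(S)| \le k-1$, adjusting otherwise. The hypothesis applied to the pair $(S, \iota(S))$ (disjoint in $\{2,\ldots,n\}$ and of total size at least $k+1$) then forces $\iota(S) \notin \mathcal{F}_1$, so $\iota(S) \in V_1 \setminus \mathcal{F}_1$ as required; injectivity would be inherited from $\phi$.

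The main obstacle is handling the case $|S| = 1$, where the pairing condition forces $|\phi(S)| = k$, so the naive choice $\{1\} \cup \phi(S)$ has size $k+1$ and must be truncated. Simple truncation rules (e.g., dropping the largest element of $\phi(S)$) can create collisions between distinct singletons whose $\phi$-partners differ only in their largest element, so $\iota$ must be constructed more carefully. The natural remedy is to bypass the intermediate $\phi$ and instead set up an explicit bipartite graph from $\mathcal{F}_{\bar 1}$ to $V_1 \setminus \mathcal{F}_1$, with edges corresponding to the forbidden configurations dictated by the hypothesis, and then invoke a Hall-type matching argument (very much in the spirit of Lemma~\ref{weighted_hall} and Corollary~\ref{perfect_matching}) to produce the injection directly.
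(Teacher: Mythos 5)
Your base case $n=k+1$ is correct, and the reduction of the case $n\ge k+2$ to producing an injection $\iota:\F_{\bar 1}\hookrightarrow V_1\setminus \F_1$ is logically sound (note, though, that this step never actually invokes an induction hypothesis). The genuine gap is that the injection is never constructed. The first attempt via Corollary \ref{perfect_matching} breaks exactly where you say it does: whenever $|\phi(S)|=k$ the set $\{1\}\cup\phi(S)$ has size $k+1$ and falls outside $V_1$, and this is not confined to singletons $S$, since the matching supplied by the corollary may assign a partner of size $k$ to sets of any size. More importantly, the proposed fallback --- a Hall-type matching from $\F_{\bar 1}$ into $V_1$ along edges ``$T\ni 1$, $T\cap S=\emptyset$, $|S|+|T|\ge k+1$'' --- cannot be carried out obliviously, i.e.\ without using which sets actually lie in $\F_{\bar 1}$. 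The full left side (all nonempty subsets of $\{2,\dots,n\}$ of size at most $k$) has $\sum_{i=1}^{k}\binom{n-1}{i}$ vertices, which exceeds $|V_1|=\sum_{i=0}^{k-1}\binom{n-1}{i}$ by $\binom{n-1}{k}-1>0$ whenever $k<n-1$. Hence Hall's condition fails for the graph as a whole, and no single matching saturates every family that could arise as $\F_{\bar 1}$. Any matching argument must therefore exploit the hypothesis on $\F$ restricted to $\{2,\dots,n\}$ to show that the sets actually present in $\F_{\bar 1}$ expand into $V_1$ --- and verifying Hall's condition for all such ``admissible'' subfamilies is essentially as hard as the theorem itself. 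As written, your last paragraph states the difficulty rather than resolving it.

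For comparison, the paper's proof takes an entirely different, matching-free route: it applies ``pushing up'' compressions $S_i$ that replace $A$ by $A\cup\{i\}$ when permissible, checks that these preserve both $|\F|$ and the hypothesis, and observes that once $\F$ is an upset in $\binom{[n]}{\le k}$ the hypothesis forces $\F$ to be intersecting, at which point the non-uniform Erd\H{o}s--Ko--Rado bound gives the result. If you wish to salvage your approach, you need either an explicit size-respecting injection or a Hall verification restricted to admissible subfamilies of $\F_{\bar 1}$; neither is routine, and neither is supplied.
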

\begin{proof}
Denote $\binom{[n]}{\leq k}=\{A\subset [n]: |A|\leq k\}$.  Let us first observe that if $\F$ is an upset in $\binom{[n]}{\leq k}$
(that is $A\in \F$ implies that $\{B\in \binom{[n]}{\leq k}: B\supseteq A\}\subset \F$)
then $\F$ is an intersecting family, and hence the bound for $|\F|$ holds. Suppose there exist $A,B\in \F$, such that $A\cap B=\emptyset$, thus $|A|+|B|\leq k$.
Since $k\leq n-1$ and $\F$ is an upset, there exists a $C\in \F$ such that $A\subset C$, $C\cap B=\emptyset$, and $|C|+|B|>k$ which is a contradiction.

Next let us show that applying so called ``pushing up'' operations $S_i(\F)$, we can transform  $\F$  to an upset $\F^*\subset\binom{[n]}{\leq k}$ of the same size, without
violating the property of $\F$. This, together with the observation above, will complete the proof.
For $i\in [n]$ we define $S_i(\F)=\{S_i(A): A\in \F\}$, where
 $$
S_i(A)=
  \begin{cases}
    A, & \hbox{if } A\cup\{i\}\in \F ~~ \hbox{or}~~ |A|=k \\
    A\cup\{i\}, & \hbox{otherwise.}
  \end{cases}
$$
 It is clear  that $|S_i(\F)|=|\F|$  and applying finitely
many operations $S_i(\F), i\in[n]$ we come to an upset $\F^*\subset \binom{[n]}{\leq k}$. To see that $S_i(\F)$ does not violate the property of $\F$ let $\F=\F_0\cup\F_1$, where $\F_1=\{A\in \F: i\in A\}, ~\F_0=\F\setminus\F_1$. Thus  $S_i(\F_1)=\F_1$. What we have to show is that  for each pair
 $A,B\in\F$  the pair $S_i(A), S_i(B)$ satisfies the condition in the theorem as well. 
In fact,  the only doubtful case is when  $A,B\in \F_0$, $A\cap B=\emptyset$, $|A|+|B|=k$. The subcase when $S_i(A)=A\cup \{i\}, S_i(B)=B\cup\{i\}$
is also clear. Thus, it remains to consider the situation
 when $S_i(A)=A$ (or $S_i(B)=B$). In this case $(A\cup\{i\})\in \F$,
since  $|A|,|B|\leq k-1$. Moreover, $(A\cup \{i\})\cap B=\emptyset$ and $|A\cup \{i\}|+|B|=k+1$,   a contradiction.
\end{proof}

To see that Theorem \ref{s-ekr} implies Theorem
\ref{main_theorem}, take $\mathcal{F}=\{F:
\emptyset \ne F \subset \{1, \cdots, n\}, \sum_{i \in F} x_i \ge 0
\}$. The family $\mathcal{F}$ satisfies the conditions in Theorem
\ref{s-ekr} since if $A, B \in \mathcal{F}$, then $\sum_{i
\in A} x_i \ge 0$, $\sum_{i \in B} x_i \ge 0$. If moreover $A \cap B
=\emptyset$, then $\sum_{i \in A \cup B} x_i \ge 0$ and it follows that
$|A \cup B| \le k$.

\section{Concluding remarks}
We have given two different proofs of the following result: for a
set of $n$ real numbers, if the sum of elements of every subset
of size larger than
$k$ is negative, then the number of subsets of nonnegative sum
is at most $\binom{n-1}{k-1}+ \cdots + \binom{n-1}{0}+1$.
The connection between questions of this type and extremal problems
for hypergraphs that appears here as well as in \cite{AHS} and some
of its
references is interesting and deserves further study.

Another intriguing question motivated by the first proof is the problem
of finding an explicit perfect matching
for Corollary \ref{perfect_matching}
%or the remark following it,
without
resorting to Hall's Theorem. When $r$ is small or $r=m-1$, one can
construct such a perfect matchings, but it seems that things get
more complicated when $r$ is closer to $m/2$.
\vspace{0.2cm}

\noindent
{\bf Acknowledgment}\, We thank Emmanuel Tsukerman for telling us
about the problem considered here, and Benny Sudakov for fruitful
discussions, useful suggestions and helpful ideas.

\end{document}